\newtheorem{thm}{Theorem}[section]
\newtheorem*{thm*}{Theorem}
\newtheorem{prop}[thm]{Proposition}
\newtheorem{lem}[thm]{Lemma}
\newtheorem{cor}[thm]{Corollary}
\newtheorem{defn}[thm]{Definition}
\newtheorem{rem}[thm]{Remark}
\newtheorem{ex}[thm]{Example}
\DeclareMathOperator\Pol{\mathcal{P}}
\DeclareMathOperator\Or{\mathcal{O}}
\DeclareMathOperator\W{\mathcal{W}}
\DeclareMathOperator\F{\mathcal{F}}
\DeclareMathOperator\g{\mathfrak{g}} 
\DeclareMathOperator\car{\mathfrak{t}}
\DeclareMathOperator\C{\mathbb{C}}
\DeclareMathOperator\R{\mathbb{R}}
\DeclareMathOperator\N{\mathbb{N}}
\DeclareMathOperator\SU{SU}
\DeclareMathOperator\U{U}
\DeclareMathOperator\dett{det}
\DeclareMathOperator\Tr{Tr}
\DeclareMathOperator\alt{alt}
\DeclareMathOperator\sym{sym}
\DeclareMathOperator\ev{ev}
\DeclareMathOperator\pr{pr}
\DeclareMathOperator\e{\epsilon}
\DeclareMathOperator\Ad{Ad}
\DeclareMathOperator\Id{Id}
\DeclareMathOperator\dimension{dim}
\title[On measures derived from orbital integrals]{On measures derived from orbital integrals}
\date{\today}
\author{Martin Miglioli}
\email[Martin Miglioli]{martin.miglioli@gmail.com}
\address[Martin Miglioli]{Instituto Argentino de Matem\'atica-CONICET. Saavedra 15, Piso 3, (1083) Buenos Aires, Argentina}
\thanks{The author was supported by IAM-CONICET, grants PIP 2010-0757 (CONICET) and PICT 2010-2478 (ANPCyT)}
\begin{document}

\begin{abstract}
The present work develops a framework to derive piecewise polynomial measures arising from invariant measures on adjoint orbits in the context of compact and semisimple Lie groups. These measures are computed from orbital integrals via transformations on spaces of polynomials endowed with the apolar inner product. In the case of the unitary group, we obtain a formula for the moments of the projection of an orbital measure.\\

\medskip

\noindent \textbf{Keywords.} orbital integral, orbital measures, piecewise polynomial measure, moments of measure, apolar inner product, Fisher Bombieri inner product, reproducing kernel. 
\end{abstract}

\maketitle
%\tableofcontents

%\section*{COMMENTS - TO BE DELETED FOR FINAL DRAFT}

%format:
%\begin{itemize}
%\end{itemize}

%content:
%\begin{itemize}
%\end{itemize}

\section{Introduction}

We develop a novel approach to the derivation of some piecewise polynomial measures or Duistermat-Heckman type measures from Harish-Chandras orbital integral formula in the context of compact semisimple Lie groups. These measures are derived from orbital measures, that is, invariant probability measures on adjoint orbits of compact semisimple Lie groups. Specifically, we derive the pushforward of an orbital measure by the projection to the Cartan algebra, and we derive the radial part of the convolution of two orbital measures. In the case of type \textrm{A} root system we give a formula for the moments of the projection of an orbital measure. The characterization of these measures was previously addressed in \cite{zubov, olshanski, faraut2} and in \cite{faraut,cmz,zuber}. They are probabilistic versions of the Schur-Horn theorem \cite{horn1} and Horn's problem \cite{horn2}. Given hermitian matrices $X$ and $Y$ with fixed eigenvalues the Schur-Horn theorem characterizes the possible diagonal entries of $X$ as the convex hull of the permutation of eigenvalues, while Horn's problem asks for a characterization of the possible eigenvalues of the sum $X+Y$. The possible eigenvalues are given by a set of linear inequalities and Horn's problem was solved by Klyatchko in \cite{klyachko}.

The approach applies directly to compact semisimple Lie groups and is based on operations which can be done in finite dimensional spaces of polynomials endowed with the apolar inner product, which is the Segal-Bargmann-Fock space inner product restricted to polynomials. These operation are performed to both sides of an equation where the Fourier-Laplace transform of the unknown measure is written as an exponential polynomial involving the discriminant.  The framework in this article was motivated in part by \cite{miglioli} where the Harish-Chandra-Itzykson-Zuber (HCIZ) integral formula was put in the context of Segal-Bargmann spaces, and also \cite{stanley} where divided difference operators and their adjoints were used to solve algebraic problems. In the present work we considered finite-degree truncation of power series and performed operations on these polynomials. Proofs in which finite-degree terms are treated separately and an exponential function is constructed at the end are not uncommon in related literature, see for example the character expansion in the HCIZ integral in \cite{salamon} and the proof of the Duistermaat-Heckman localization formula \cite{salamon}.

The article is organized as follows. In Section \ref{secprel} we present the necessary results to state and prove the main theorems of the article, including several characterizations of the adjoint of the division by the discriminant. In Section \ref{sec1} we obtain a characterization of the pushforward of an orbital measure, and we obtain the moments of the measure for type \textrm{A} root systems. In Section \ref{sec2} we provide a characterization of the radial part of the convolution of two orbital measures. 

\section{Preliminaries}\label{secprel}
We review the results necessary to prove the main results of the article, such as the Harish-Chandra integral formula in Section \ref{orbital integrals} and operators between spaces of polynomials endowed with the apolar inner product in Section \ref{secspacespolyomials}. We also provide different characterizations of the adjoint of the division by the discriminant in Section \ref{secadjdiv}. In Section \ref{secadjdivA} we provide a characterization of this operator in the case of type \textrm{A} root systems. Most results are known, to the best of our knowledge Propositions \ref{padjtransl}, Proposition \ref{propajointinvderiv}, Theorem \ref{teocompdeI} and Proposition \ref{thmmatrixcoef} are new.

\subsection{Orbital measures and the Harish-Chandra integral formula}\label{orbital integrals}
Let $G$ be a compact, semisimple and connected Lie group with Lie algebra $\g$. We endow $\g$ with an $\Ad$-invariant inner product that is unique up to normalization and denote it by $\langle\cdot,\cdot\rangle$, $\W$ is the Weyl group, $\e(w)$ is the sign of $w\in\W$, that is, $\e(w)=(-1)^{\vert w\vert} $ where $\vert w\vert$ is the number of reflections necessary to generate $w$. A compact Lie group admits a unique invariant probability measure called the Haar measure and the adjoint action induces a unique invariant probability measure $\nu_a$ on the orbits $\Or_a=\Ad_G(a)$ for $a\in \g$. In \cite{harish} Harish-Chandra proved a formula for orbital integrals in Lie algebras in the case of compact, connected and semisimple Lie groups, see the expository article \cite{mcswiggen} and the references therein. The orbital integral admits an expression as an exponential polynomial. It is usually written

\begin{align*}
\Delta(x)\Delta(y)\int_G e^{\langle \Ad_gx,y\rangle}dg= \frac{\left[\Delta,\Delta\right]}{\vert\W\vert}\sum_{w\in\W}\epsilon(w)e^{\langle w(x),y\rangle},
\end{align*}
where $x$ and $y$ are in a Cartan algebra $\car_{\C}$ of $\g_{\C}$. The discriminant

$$\Delta(x)=\prod_{\alpha\in\Phi^+}\langle\alpha,x\rangle$$
is the product of the positive roots $\Phi^+$ considered as linear functionals. The term $\left[\Delta,\Delta\right]$ is computed with the apolar inner product which we are going to introduce in the next section. We usually consider the element $a\in\car$ of the orbit $\Or_a$ as fixed and write when $\Delta(a)\neq 0$

\begin{align}\label{eqhc}
\F_a(x)=\int_G e^{\langle \Ad_ga,x\rangle}dg=\frac{\left[\Delta,\Delta\right]}{\Delta(a)\Delta(x)}  \frac{1}{\vert\W\vert}\sum_{w\in\W}\epsilon(w)e^{\langle w(a),x\rangle}
\end{align}
as the Fourier-Laplace transform of the probability measure $\nu_a$ on the orbit $\Or_a$.

In the case of type \textrm{A} root systems we consider the unitary group $\U(n)$, its center is not trivial but this makes the link to the theory of symmetric polynomials more direct. The unitary orbital integral in this case is known as the Harish-Chandra-Itzykson-Zuber (HCIZ) integral \cite{itzykson}. It is usually written

\begin{equation}\label{formuHCIZ}
\begin{aligned}
\int_{\U(n)} e^{\Tr(uAu^{-1}B)}du&=\left(\prod_{p=1}^{n-1}p!\right)\frac{\dett{\left[ e^{a_ib_j}\right] _{i,j=1}^n}}{\Delta(A)\Delta(B)}\\
&=\left(\prod_{p=1}^{n}p!\right)\frac{\frac{1}{n!}\sum_{w\in S_n}e^{\langle A,w(B)\rangle}}{\Delta(A)\Delta(B)}
\end{aligned}
\end{equation}
where $\dett$ is the determinant of a matrix, $\U(n)$ is the group of $n$-by-$n$ unitary matrices, $A$ and $B$ are fixed $n$-by-$n$ diagonal matrices with eigenvalues $a_1< \dots <a_n$ and $b_1< \dots <b_n$ respectively, and
$$\Delta(A)=\prod_{i<j}(a_j-a_i)$$
is the Vandermonde determinant. We note that in this context 
$$\left[\Delta,\Delta\right]=\Delta(\partial)\Delta\vert_{z=0}=\prod_{p=1}^{n}p!,$$
where $\Delta(\partial)$ is the Vandermonde determinant with differentials $\frac{\partial}{\partial x_i}$ instead of the variables $x_i$.

\begin{rem}
The HCIZ has become an important identity in quantum field theory, random matrix theory, and algebraic combinatorics. 
\end{rem}

\subsection{Spaces of polynomials and operators between them}\label{secspacespolyomials}

Let $V$ be a vector space of dimension $n\in\N$ with the inner product $\langle \cdot, \cdot\rangle$. This space has coordinates $(x_1,\dots,x_n)$ given by an orthonormal basis. We define the space of polynomials on $V$ with degree not greater than $k\in \N_0$ as
$$\Pol^k(V)=\{f\in \Pol(V):\deg(f)\leq k\}.$$
This space is endowed with the apolar inner product 
$$\left[ f,g\right]=f(\partial)g(x)\vert_{x=0}$$
for $f,g\in \Pol^k(V)$. Here, $f(\partial)$ is the polynomial $f(\frac{\partial}{\partial x_1},\dots,\frac{\partial}{\partial x_n})$. This inner product is sometimes called Fisher or Bombieri inner product. If $(x_1,\dots,x_n)$ are coordinates given by an orthonormal basis of $V$ then the monomials given by $x^{\beta}/\sqrt{\beta !}$ are orthonormal, where $\beta\in\N^n$ is a multi-index and $\beta !:=\beta_1 !\beta_2 !\dots\beta_n!$.
Note that 
\begin{equation*}
	\left[ x^{\beta},x^{\gamma}\right] =\begin{cases}
	\beta! & \text{if $\beta=\gamma$}\\
	0  & \text{if $\beta\neq \gamma$}
	\end{cases}
\end{equation*}	
Let	
$$f(x)=\sum_{\beta}f_{\beta}x^{\beta}\qquad \mbox{and}\qquad g(x)=\sum_{\beta}g_{\beta}x^{\beta}$$ 
be two polynomials on $V$ where $\beta$ runs on multi-indices, then
$$\left[f,g\right]= f(\partial)g(x)\vert_{x=0}=\sum_{\beta}f_{\beta}g_{\beta}\beta !.$$
From this formula it follows that the apolar inner product is symmetric.

\begin{rem}
The apolar inner product is the Segal-Bargmann-Fock inner product \cite{hall,neretin} restricted to real polynomials. If $f,g$ are polynomials defined over the complex numbers then
$$g^*(\partial)f(z)\vert_{z=0}=\frac{1}{\pi^n}\int_{\C^n}f(z)\overline{g(z)}e^{-\vert z\vert^2}dz$$
where $g^*(z)=\overline{g(\overline{z})}$.
\end{rem} 

%\subsection{Polynomials with apolar norm and the reproducing property}
A multivariate version of Taylor's theorem applied to polynomials asserts that for a polynomial $f$ and $x,a\in V$
$$f(x+a)=e^{\langle a, \partial\rangle}f(x)=\sum_{\beta}\frac{a^{\beta}}{\beta !}\partial^{\beta}f(x).$$
If for $a\in V$ we denote by $T_af(x)=f(x+a)$ the translation operator on polynomials then $T_a=e^{\langle a, \partial\rangle}$ as operators on polynomials.
In the space $\Pol^k(V)$ we define for $a\in V$ the functions
$$q^k_a(x)=\sum_{l=0}^{k}\frac{1}{l!}\langle x,a\rangle^l$$
which are truncations of exponentials $e^{\langle x,a \rangle}$. We denote with $\ev_a(f)=f(a)$ the evaluation functionals at $a\in V$.

\begin{prop}\label{preprodq}
For $a\in V$ the functions $q_a^k$ has the reproducing property 
$$\left[f,q_a^k\right]=\ev_a(f)=f(a)$$
for $f\in\Pol^k(V).$
\end{prop}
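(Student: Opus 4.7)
My plan is to reduce the reproducing property to the multivariate Taylor identity $T_a=e^{\langle a,\partial\rangle}$ recorded just before the statement. Evaluating $T_a f$ at $x=0$ gives, for any polynomial $f$,
\[
f(a) \;=\; T_a f(x)\big|_{x=0} \;=\; \sum_{l\geq 0}\frac{1}{l!}\,\langle a,\partial\rangle^l f(x)\big|_{x=0}.
\]
I would then restrict to $f\in\Pol^k(V)$: since any product of more than $k$ first-order partial derivatives annihilates a polynomial of degree at most $k$, all terms with $l>k$ vanish and the sum truncates to $\sum_{l=0}^k\frac{1}{l!}\langle a,\partial\rangle^l f(x)\big|_{x=0}$. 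Reading $q_a^k(x)=\sum_{l=0}^k \langle x,a\rangle^l/l!$ in its truncated-exponential normalization, this is precisely $q_a^k(\partial) f(x)\big|_{x=0}$, which by the very definition of the apolar inner product equals $[q_a^k,f]$.

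To finish, I would invoke the symmetry of the apolar inner product, already established in the preliminaries via the explicit formula $[f,g]=\sum_\beta f_\beta g_\beta\,\beta!$, which immediately yields $[f,q_a^k]=[q_a^k,f]=f(a)$. An equivalent route avoiding the translation operator is to expand directly in the monomial basis via the multinomial theorem, $\langle x,a\rangle^l/l!=\sum_{|\beta|=l} a^\beta x^\beta/\beta!$, and then apply $[x^\beta,x^\gamma]=\delta_{\beta\gamma}\beta!$ so that the $\beta!$ factors cancel and the sum collapses to $\sum_{|\beta|\leq k} f_\beta a^\beta = f(a)$. There is no real technical obstacle: the proof is three lines either way, and the only point requiring care is the $1/l!$ normalization of $q_a^k$, which is exactly what cancels the $\beta!$ coming from the apolar pairing.
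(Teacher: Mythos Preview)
Your argument is correct and is essentially the paper's own proof run in the opposite direction: the paper starts from $[f,q_a^k]$, passes (via symmetry) to $q_a^k(\partial)f(x)|_{x=0}$, extends the truncated sum to $e^{\langle\partial,a\rangle}f(x)|_{x=0}$, and applies Taylor's formula to land at $f(a)$, whereas you start from $f(a)=T_af(x)|_{x=0}$ and truncate back to $q_a^k(\partial)f(x)|_{x=0}$. You are also right to flag the $1/l!$ normalization: the displayed definition $q_a^k(x)=\sum_{l=0}^k\langle x,a\rangle^l$ in the preliminaries is a typo (without the $1/l!$ the reproducing identity fails already for $f(x)=x^2$ in one variable), and both the paper's proof and yours only go through with the normalization you use.
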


\begin{proof}
For $a\in V$ and $f\in\Pol^k(V)$
\begin{align*}
\left[f,q_a^k \right]&=q_a^k(\partial)f(x)\vert_{x=0}=e^{\langle\partial,a\rangle}f(x)\vert_{x=0}\\
&=f(x+a)\vert_{x=0}=f(a)
\end{align*}
where we used the definition of the apolar inner product and Taylor's theorem for polynomials. This property also follows from the reproducing property of Segal-Bargmann spaces restricted to polynomials, see Section $5$ in \cite{hall}.
\end{proof}

Consider the case that the inner product space is endowed with a finite refection group $\W$ that has an alternating character $\epsilon:\W\to\{1,-1\}$. We can define the space of alternating polynomials 
$$\Pol^k_{\alt}(V)=\{f\in\Pol^k(V):f(w(x))=\epsilon(w)f(x)\mbox{  for all  }w\in\W\}$$
and the space of symmetric polynomials
$$\Pol^k_{\sym}(V)=\{f\in\Pol^k(V):f(w(x))=f(x)\mbox{  for all  }w\in\W\}.$$
In $\Pol^k(V)$ we define the orthogonal projection onto the alternating polynomials $P_{\alt}$ and the orthogonal projection onto the symmetric $P_{\sym}$, they are given by
$$P_{\alt}f(x)=\frac{1}{\vert\W\vert}\sum_{w\in\W}\epsilon(w)f(w(x))\qquad\mbox{and}\qquad P_{\sym}f(x)=\frac{1}{\vert\W\vert}\sum_{w\in\W}f(w(x)).$$

\begin{lem}\label{lemaproj}
For $f\in \Pol^k(V)$ and $g\in \Pol_{\alt}^k(V)$ 
$$\left[P_{\alt}f,g \right]=\left[f,g \right]$$
\end{lem}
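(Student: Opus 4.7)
The plan is to show directly that $[f - P_{\alt}f,\, g] = 0$ whenever $g$ is alternating, by exploiting two facts: that $P_{\alt}$ averages over the Weyl group with sign character, and that the apolar inner product is invariant under orthogonal transformations of $V$.

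First I would establish the key auxiliary fact: for every $w\in\W$ and every $f,g\in\Pol^k(V)$,
\[
[f\circ w,\, g\circ w] = [f,g].
\]
Since $\W$ is a finite reflection group acting on the inner product space $V$, every $w\in\W$ is an orthogonal transformation; under an orthogonal change of orthonormal coordinates the expression $[f,g]=\sum_\beta f_\beta g_\beta\,\beta!$ (equivalently $f(\partial)g(x)|_{x=0}$) is preserved because $\partial$ transforms like a vector in $V$ and $x=0$ is a fixed basepoint. Equivalently, one can note that the monomials $x^\beta/\sqrt{\beta!}$ form an orthonormal basis for any choice of orthonormal coordinates. This $\W$-invariance can be rephrased as $[f\circ w, g] = [f, g\circ w^{-1}]$.

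Next I would compute directly. Using the definition of $P_{\alt}$ and linearity,
\[
[P_{\alt}f, g] = \frac{1}{|\W|}\sum_{w\in\W}\epsilon(w)\,[f\circ w,\, g] = \frac{1}{|\W|}\sum_{w\in\W}\epsilon(w)\,[f,\, g\circ w^{-1}].
\]
Since $g$ is alternating, $g\circ w^{-1} = \epsilon(w^{-1})\,g = \epsilon(w)\,g$, and because $\epsilon(w)^2 = 1$ each term of the sum equals $[f,g]$. Summing over $\W$ and dividing by $|\W|$ yields $[P_{\alt}f,g] = [f,g]$, as desired.

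There isn't really a hard step here; the only nontrivial point is the $\W$-invariance of the apolar inner product, which follows from the coordinate-free interpretation of $f(\partial)g(x)|_{x=0}$ (or equivalently from the Segal--Bargmann--Fock description in the remark, where orthogonal transformations of $\R^n\subset\C^n$ are unitary on $\C^n$ and hence preserve the integral). After that observation the proof is a one-line averaging argument.
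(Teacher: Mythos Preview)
Your proof is correct and rests on the same key fact as the paper's, namely the $\W$-invariance of the apolar inner product. The paper packages this more concisely: since $P_{\alt}$ is by definition the orthogonal projection onto $\Pol^k_{\alt}(V)$, it is self-adjoint, so $[P_{\alt}f,g]=[f,P_{\alt}g]=[f,g]$; your computation is precisely what verifies this self-adjointness from the averaging formula.
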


\begin{proof}
This follows from $\left[P_{\alt}f,g \right]=\left[f,P_{\alt}g \right]=\left[f,g \right]$, since $P_{\alt}^*=P_{\alt}$ and projections fix the vectors in their range.
\end{proof}

We define for $a\in V$ the function 
$$r_a^k=P_{\alt}q_a^k.$$

\begin{prop}\label{preprodr}
For $a\in V$ the function $r_a^k$ has the reproducing property 
$$\left[r_a^k,g\right]=g(a)$$ 
in $\Pol_{\alt}^k(V)$.
\end{prop}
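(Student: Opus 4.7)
The proposition follows in two lines by combining the two previously established results. My plan is to write $[r_a^k,g] = [P_{\alt} q_a^k, g]$ by the definition of $r_a^k$, then apply Lemma \ref{lemaproj} (with $f = q_a^k$ and $g \in \Pol^k_{\alt}(V)$) to strip off the projection, obtaining $[q_a^k, g]$. Finally, Proposition \ref{preprodq} gives $[q_a^k, g] = \ev_a(g) = g(a)$.

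So the chain is
\begin{align*}
[r_a^k, g] = [P_{\alt} q_a^k, g] = [q_a^k, g] = g(a),
\end{align*}
where the three equalities use respectively the definition of $r_a^k$, Lemma \ref{lemaproj}, and Proposition \ref{preprodq}.

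There is essentially no obstacle here; the hypothesis $g \in \Pol^k_{\alt}(V)$ is precisely what lets us invoke Lemma \ref{lemaproj}, and the reproducing property of $q_a^k$ in all of $\Pol^k(V)$ does the rest. The only thing worth noting in the write-up is that $r_a^k \in \Pol^k_{\alt}(V)$ by construction, so the identity indeed expresses a reproducing kernel relation within the alternating subspace.
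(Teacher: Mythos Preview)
Your proof is correct and follows essentially the same route as the paper's: definition of $r_a^k$, then removing $P_{\alt}$ via Lemma~\ref{lemaproj} (the paper inlines this step using $P_{\alt}^*=P_{\alt}$ directly), and finally the reproducing property of $q_a^k$ from Proposition~\ref{preprodq}.
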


\begin{proof}
Note that
\begin{align*}
\left[r_a^k,g \right]&=\left[P_{\alt}q_a^k,g \right]=\left[q_a^k,P_{\alt}g \right]\\
&=\left[q_a^k,g \right]=g(a)
\end{align*}
where we used the definition of $r_a^k$, $P_{\alt}^*=P_{\alt}$ and the reproducing property of $q_a^k$ sated in Proposition \ref{preprodq}.
\end{proof}
Analogous properties hold in the space of symmetric polynomials $\Pol_{\sym}^k(V)$ but we are not going to use them. 

For $a\in V$ we have the translation operator 
$$T_a:\Pol^k(V)\to\Pol^k(V)\qquad \mbox{given by}\qquad T_af(x)=f(x+a).$$ 
This operator has an adjoint $T_a^*:\Pol^k(V)\to\Pol^k(V)$. To characterize the adjoint we use the notation $F^k$ to truncate polynomials, if we have a polynomial $f(x)=\sum_{\beta}f_{\beta}x^{\beta}$ then 
$$F^k(f(x))=\sum_{\beta:\vert\beta\vert\leq k}f_{\beta}x^{\beta}.$$ 
We also denote with $M_fg=f.g$ the multiplication of polynomials. Define the operator 
$$F^kM_{q_a^k}:\Pol^k(V)\to\Pol^k(V)$$
which consists of multiplying by $q_a^k$ and discarding in the result the terms of degree greater than $k$.

\begin{prop}\label{padjtransl}
For $a\in V$ and $k\in\N_0$ 
$$T_a^*=F^kM_{q_a^k}.$$
\end{prop}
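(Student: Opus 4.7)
The plan is to verify the adjoint relation pointwise: I will show that $[T_a f, g] = [f, F^k M_{q_a^k} g]$ for all $f, g \in \Pol^k(V)$. The key building block is the elementary duality between multiplication and differentiation under the apolar pairing. Since $(x_i f)(\partial) = \partial_i f(\partial) = f(\partial)\partial_i$ as differential operators, the definition $[f,g] = f(\partial)g(x)|_{x=0}$ immediately gives $[x_i f, g] = [f, \partial_i g]$, and iterating yields
\[
[p(\partial) f,\, g] = [f,\, p(x) g]
\]
for every polynomial $p$; note that the right-hand pairing makes sense in the ambient polynomial ring even when $p(x) g$ exceeds degree $k$.

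Next, I would use the multivariate Taylor theorem to write $T_a = e^{\langle a, \partial\rangle}$, so that $T_a f = \sum_{l \geq 0} \tfrac{1}{l!} \langle a, \partial\rangle^l f$ (a finite sum since $f$ has finite degree). Applying the duality termwise yields
\[
[T_a f,\, g] = \sum_{l \geq 0} \tfrac{1}{l!}\, [f,\, \langle a, x\rangle^l g].
\]

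The last step is a truncation argument. For $f \in \Pol^k(V)$ the apolar pairing $[f, h]$ depends only on the degree-$\leq k$ part of $h$: if $f$ is homogeneous of degree $m \leq k$, then $f(\partial)$ is a homogeneous differential operator of degree $-m$, so $f(\partial)h(0)$ extracts the degree-$m$ component of $h$. Hence summands with $l > k$ in the series above are supported in degrees $> k$ and drop out, while summands with $l \leq k$ assemble into precisely $q_a^k g$. One may then insert $F^k$ for free, obtaining
\[
[T_a f,\, g] = [f,\, q_a^k g] = [f,\, F^k M_{q_a^k} g],
\]
which gives the claim. The main thing to handle carefully is the bookkeeping around the two truncations, namely the cutoff of the exponential series at level $k$ and the role of $F^k$ after multiplication; both are reconciled by the observation that the apolar pairing against $\Pol^k(V)$ ignores everything of degree greater than $k$, so the exponential can be replaced by $q_a^k$ and the product $q_a^k g$ by its degree-$k$ truncation without altering either side.
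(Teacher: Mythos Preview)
Your proof is correct and follows essentially the same approach as the paper: both rely on Taylor's identity $T_a=e^{\langle a,\partial\rangle}$, the adjointness of multiplication and differentiation under the apolar pairing, and the observation that the pairing against $\Pol^k(V)$ ignores terms of degree greater than $k$. The only cosmetic difference is direction: the paper starts from $[F^kM_{q_a^k}f,g]$, writes it as $e^{\langle\partial,a\rangle}f(\partial)g(x)\vert_{x=0}$, and commutes the differential operators to reach $[f,T_ag]$, whereas you start from $[T_af,g]$ and work toward $[f,F^kM_{q_a^k}g]$; the paper also sketches a second verification on monomials via the multinomial formula, which you omit but do not need.
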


\begin{proof}
For $f,g\in \Pol^k(V)$
\begin{align*}
\left[F^kM_{q_a^k}f,g\right]&=e^{\langle \partial,a\rangle}f(\partial)g(x)\vert_{x=0}\\
&=f(\partial)e^{\langle \partial,a\rangle}g(x)\vert_{x=0}\\
&=f(\partial)g(x+a)\vert_{x=0}\\
&=f(\partial)T_ag(x)\vert_{x=0}\\
&=\left[f,T_ag\right].
\end{align*}
An alternative proof can be given by checking the equation in monomials $f(x)=x^{\beta}$ and $g(x)=x^{\beta}$ using the binomial formula for several variables. Thus, for multi-indices $\beta\leq\gamma$ 
\begin{align*}
\left[x^{\beta},T_ax^{\gamma}\right]&=\left[x^{\beta},(x+a)^{\gamma}\right]=\left[x^{\beta},\sum_{\delta\leq\gamma}\binom{\gamma}{\delta}x^{\delta}a^{\beta-\delta}\right]\\
&=\left[x^{\beta},\binom{\gamma}{\beta}x^{\beta}a^{\gamma-\beta}\right]=a^{\gamma-\beta}\beta!\frac{\gamma!}{\beta!(\gamma-\beta)!}.
\end{align*}
One the other hand, with a slight abuse of notation when using the not truncated exponential
$$\left[e^{\langle a,x\rangle}x^{\beta},x^{\gamma}\right]=\left[\sum_{\delta}\frac{1}{\delta!}a^{\delta}x^{\delta+\beta},x^{\gamma}\right]=\left[\frac{1}{(\gamma-\beta)!}a^{\gamma-\beta}x^{\gamma},x^{\gamma}\right]=a^{\gamma-\beta}\frac{\gamma!}{(\gamma-\beta)!}.$$
\end{proof}

\subsection{General characterization of the adjoint of the division by the discriminant}\label{secadjdiv} 
We now consider vector spaces which are the Cartan algebras $\car$ of Lie algebras of compact semisimple Lie groups. One of the essential properties of the discriminant $\Delta:\car\to\R$ is that it skew with respect to the action of $\W$, $\Delta(w(x))=\e(w)\Delta(x)$. This follows from the fact that if $\alpha$ is a simple root the reflection through the plane $\langle \alpha,x\rangle=0$ sends $\alpha\mapsto-\alpha$ and permutes the other positive roots. The next proposition is well known.

\begin{prop}\label{propdivdisc}
Every polynomial $f\in \Pol^k_{\alt}(\car)$ is divisible by $\Delta$.
\end{prop}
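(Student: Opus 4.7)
The plan is the standard argument using reflections and unique factorization in the polynomial ring on $\car$.

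First, I would fix a positive root $\alpha \in \Phi^+$ and consider the associated reflection $s_\alpha \in \W$ through the hyperplane $H_\alpha = \{x \in \car : \langle \alpha, x \rangle = 0\}$. Since $s_\alpha$ is generated by a single reflection, $\e(s_\alpha) = -1$. For $f \in \Pol^k_{\alt}(\car)$ this gives $f(s_\alpha x) = -f(x)$ for all $x$. Restricting to $H_\alpha$, on which $s_\alpha$ acts as the identity, yields $f(x) = -f(x)$, hence $f$ vanishes identically on $H_\alpha$.

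Next, I would invoke the elementary fact that in a polynomial ring over a field, a polynomial that vanishes on the hyperplane cut out by a nonzero linear form $\ell$ is divisible by $\ell$ (this follows from the division algorithm in the variable dual to $\ell$, or equivalently from the Nullstellensatz applied to the irreducible polynomial $\ell$). Applied to $\ell(x) = \langle \alpha, x \rangle$, this shows that $\langle \alpha, \cdot \rangle$ divides $f$ in $\Pol(\car)$.

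Finally, since distinct positive roots $\alpha, \beta \in \Phi^+$ determine distinct hyperplanes in $\car$, the linear forms $\langle \alpha, \cdot\rangle$ and $\langle \beta, \cdot\rangle$ are non-associate irreducibles in the UFD $\Pol(\car)$, hence pairwise coprime. By unique factorization, their product
\[
\Delta(x) = \prod_{\alpha \in \Phi^+} \langle \alpha, x \rangle
\]
divides $f$, which is the desired conclusion. No step presents a genuine obstacle here; the only mild subtlety is ensuring that distinct positive roots give non-proportional linear forms, which is immediate since the roots themselves are distinct vectors in $\car$.
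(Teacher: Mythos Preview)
Your argument follows essentially the same route as the paper's proof: use the reflection $s_\alpha$ to force $f$ to vanish on $H_\alpha$, deduce divisibility by each linear form $\langle\alpha,\cdot\rangle$, and then pass to the product $\Delta$ via coprimality in the UFD $\Pol(\car)$.

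The one imprecision is in your final step. You assert that distinct positive roots give non-proportional linear forms ``since the roots themselves are distinct vectors in $\car$''; but distinct vectors can of course be proportional. What is actually needed --- and what the paper invokes explicitly --- is that the root system of a semisimple Lie algebra is \emph{reduced}: the only root proportional to $\alpha$ is $-\alpha$, so two distinct \emph{positive} roots are never proportional. With that correction your proof is complete and matches the paper's.
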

This proposition implies that
$$\Pol^{k+\vert\Phi^+\vert}_{\alt}(\car)=\Delta.\Pol^{k}_{\sym}(\car).$$

\begin{defn}\label{definicion}
The division by the discriminant is defined as
$$D_{\Delta}:\Pol^{k+\vert\Phi^+\vert}_{\alt}(\car)\to\Pol^{k}_{\sym}(\car)\qquad\mbox{given by}\qquad D_{\Delta}f(x)=\frac{f(x)}{\Delta(x)},$$
and its adjoint $I_{\Delta}=D_{\Delta}^*$ is
$$I_{\Delta}=D_{\Delta}^*:\Pol^{k}_{\sym}(\car)\to\Pol^{k+\vert\Phi^+\vert}_{\alt}(\car).$$
\end{defn}

We will often write $l=\vert\Phi^+\vert$ for brevity.  Next we give a computation in the case of polynomials of lowest degree. We denote with $\mathbf{1}$ the constant polynomial $\mathbf{1}(x)=1$.

\begin{prop}\label{pint1}
The operator $I_{\Delta}$ satisfies
$$I_{\Delta}\mathbf{1}=\frac{\Delta}{\left[\Delta,\Delta\right]}.$$
\end{prop}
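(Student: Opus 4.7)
The plan is to reduce the identity to a scalar computation in the one-dimensional space $\Pol^{\vert\Phi^+\vert}_{\alt}(\car)$, using the adjoint relation from Proposition \ref{padjointdiscr}.

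First, I would argue that $I_\Delta\mathbf{1}$ is a scalar multiple of $\Delta$. By the stated codomain of $I_\Delta$, the polynomial $I_\Delta\mathbf{1}$ lies in $\Pol^{\vert\Phi^+\vert}_{\alt}(\car)$. Since $\deg\Delta=\vert\Phi^+\vert$, Proposition \ref{propdivdisc} together with a degree count (the quotient $I_\Delta\mathbf{1}/\Delta$ would be a symmetric polynomial of degree at most zero, hence constant) forces $I_\Delta\mathbf{1}=c\Delta$ for some constant $c\in\R$.

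Second, I would pin down $c$ by pairing both sides with $\Delta$ in the apolar product. On one hand,
$$[c\Delta,\Delta]=c[\Delta,\Delta].$$
On the other hand, Proposition \ref{padjointdiscr} gives $I_\Delta=D_\Delta^*$, and by the definition of $D_\Delta$ we have $D_\Delta\Delta=\mathbf{1}$, so
$$[I_\Delta\mathbf{1},\Delta]=[\mathbf{1},D_\Delta\Delta]=[\mathbf{1},\mathbf{1}]=1,$$
using $\mathbf{1}(\partial)\mathbf{1}(x)\vert_{x=0}=1$ for the final equality. Equating the two expressions yields $c=1/[\Delta,\Delta]$, and the claimed formula $I_\Delta\mathbf{1}=\Delta/[\Delta,\Delta]$ follows.

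I do not anticipate a substantive obstacle. The only mildly non-obvious ingredient is the assertion that the iterated anti-derivative $I_\Delta$ really lands in $\Pol^{\vert\Phi^+\vert}_{\alt}(\car)$ rather than in a larger subspace of $\Pol^{\vert\Phi^+\vert}(\car)$; this is built into the paper's declaration of the codomain of $I_\Delta$ right before Proposition \ref{padjointdiscr} and is what makes the immediate appeal to Proposition \ref{propdivdisc} possible. Once the range is known to be alternating, the rest is a one-line computation via the adjoint.
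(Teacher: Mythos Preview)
Your argument is correct and essentially identical to the paper's. Both proofs first observe that $I_\Delta\mathbf{1}$ must be a scalar multiple of $\Delta$ (you via Proposition \ref{propdivdisc} plus a degree count, the paper via an orthogonality argument that amounts to the same thing since $\Pol^{\vert\Phi^+\vert}_{\alt}(\car)=\R\Delta$), and then determine the scalar by the same adjoint computation $[\mathbf{1},D_\Delta\Delta]=[\mathbf{1},\mathbf{1}]=1$.
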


\begin{proof}
We have $D_{\Delta}(\R\Delta)=\R\mathbf{1}$. The space $\R\Delta$ are the polynomials of minimum degree in $\Pol^k_{\alt}$ and they are orthogonal to any alternating polynomial $\Delta.f$ where $f$ is a symmetric polynomial without constant term. Similarly, the constants are orthogonal to any symmetric polynomial without constant term. It follows that $D_{\Delta}(\{\R\Delta\}^{\perp})=\{\R\mathbf{1}\}^{\perp}$. Therefore, $I_{\Delta}\mathbf{1}=D^*_{\Delta}\mathbf{1}=d\Delta$ for $d\in\R$. We have 
$$1=\left[D_{\Delta}\Delta,\mathbf{1}  \right]=\left[\Delta,I_{\Delta}\mathbf{1}\right]=\left[\Delta,d\Delta\right]=d\left[\Delta,\Delta\right].$$
Hence $d=\left[\Delta,\Delta\right]^{-1}$ and the conclusion of the proposition follows.
\end{proof}

The discriminant with differential variables or differential discriminant is the operator
$$\Delta(\partial)=\prod_{i=1}^l\langle\partial,\alpha_i\rangle=\prod_{i=1}^l\partial_{\alpha_i},$$
where we denote $\partial_{\alpha}=\langle \partial,\alpha\rangle$. 

\begin{lem}\label{lemweylfifferential}
The operator $\Delta(\partial)$ satisfies 
$$w\cdot \left(\Delta(\partial)\right)=\epsilon(w)\Delta(\partial)$$
for each $w\in\W$.
\end{lem}

\begin{proof}
For a $w\in\W$ and $\alpha\in\car$ note that $w\cdot\partial_{\alpha}=\partial_{w(\alpha)}$. Hence 
\begin{align*}
w\cdot \Delta(\partial)&=w\cdot\left( \prod_{i=1}^l\partial_{\alpha_i}\right)=\prod_{i=1}^l\partial_{w(\alpha_i)}\\
&=\epsilon(w)\prod_{i=1}^l\partial_{\alpha_i}=\epsilon(w)\Delta(\partial),
\end{align*}
where in the third inequality we used the definition of $\epsilon(w)$ a the parity of the number of positive roots which pass to negative roots under $w$.
\end{proof}

For $k\in\N_0$ we define the operator
$$ \Delta(\partial):\Pol^{k+\vert\Phi^+\vert}_{\alt}(\car)\to\Pol^{k}_{\sym}(\car).$$
Each directional derivative $\partial_{\alpha_i}$ decreases de degree of the polynomials by $1$ or maps to $0$, so $\Delta(\partial)$ maps polynomials of degree $k+l$, to polynomials of degree $k$. Also, by Lemma \ref{lemweylfifferential} for $f\in\Pol^{k+l}_{\alt}(\car)$ and $w\in\W$
$$w\cdot (\Delta(\partial)f)=(w\cdot\Delta(\partial))(w\cdot f)=\epsilon(w)\Delta(\partial)\epsilon(w)f=\Delta(\partial)f,$$
thus, $\Delta(\partial)$ maps into symmetric polynomials and it is well defined.

The adjoint $D_{\Delta}^*$ of $D_{\Delta}$ can be characterized as follows in the general case.

\begin{prop}\label{propajointinvderiv}
For $k\in\N_0$ the operator $D_{\Delta}^*$ is the inverse of the discriminant with differential variables $\Delta(\partial)$, that is, 
$$D_{\Delta}^*=\Delta(\partial)^{-1}.$$
\end{prop}

\begin{proof}
We use the fact that differentiation by a variable is the adjoint of multiplication by the same variable, therefore, multiplication by $\Delta$ is the adjoint of $\Delta(\partial)$.  For polynomials $f,g\in\Pol^k_{\sym}(\car)$
\begin{align*}
\left[ f,g\right]&=\left[ D_{\Delta}(\Delta.f),g\right]\\
&=\left[ \Delta.f,D_{\Delta}^*g\right]\\
&=\left[ f,\Delta(\partial)D_{\Delta}^*g\right],
\end{align*}
thus, $\Delta(\partial)D_{\Delta}^*=\Id$. Also, for polynomials $r,s\in\Pol^{k+l}_{\alt}(\car)$
\begin{align*}
\left[ r,s\right]&=\left[ \Delta.D_{\Delta}(r),s\right]\\
&=\left[ D_{\Delta}(r),\Delta(\partial)s\right]\\
&=\left[ r,D_{\Delta}^*\Delta(\partial)s\right],
\end{align*}
so that $D_{\Delta}^*\Delta(\partial)=\Id$.
\end{proof}

For $\alpha\in V$ where $V$ is a vector space with inner product we define the antiderivative operator
$$I_{\alpha}: \Pol(V)\to \Pol(V)$$
as
$$g(x)\mapsto I_{\alpha}g(x)=\int^{\langle x,\alpha^{\circ}\rangle}_0 g(x-\alpha t)dt,$$
where $\alpha^{\circ}=\alpha/\langle\alpha,\alpha\rangle$. Note that the integration is done on the segment from $x$ to the projection of $x$ to the hyperplane $\{\alpha\}^{\perp}$. For $\alpha$ such that $\|\alpha\|=1$, the signed distance of $x$ to the hyperplane $\{\alpha\}^{\perp}$ is $\langle x,\alpha\rangle$. 

\begin{prop}\label{propgradoIrightinverse}
For $\alpha\in V$ the operator $I_{\alpha}$ increases the degree by $1$, that is
$$\deg(I_{\alpha}f)=\deg(f)+1$$
for every $f\in\Pol(V)$. For every $f\in\Pol(V)$
$$\partial_{\alpha}I_{\alpha}f=f$$ 
holds, that is, $\partial_{\alpha}I_{\alpha}=\Id$. Also, the identity $I_{-\alpha}=-I_{-\alpha}$ holds.
\end{prop}

\begin{proof}
For a constant $c\neq 0$ and unit norm $\alpha\in V$ note that $\partial_{c\alpha}=c\partial_{\alpha}$. Also, by a linear change of variables in the definition of $I_{\alpha}$ we can assume that $\Vert\alpha\Vert=1$. Since an orthogonal change of coordinates leaves the equation unchanged we can prove the identity for the standard basis vector $\alpha=e_1$. We can also verify the identity for polynomials $x_1^kf_k(x_2,\dots,x_n)$ since polynomials of this type span the spaces of polynomials. Since the operators $I_{e_1}$ and $\partial_{e_1}$ don't act on the factors $f_k$ it is enough to verify the identity $I_{e_1}\partial_{e_1}=\Id$ in the monomials $x_1^k$. We have 
$$\partial_{e_1}(x_1^k)=\frac{1}{k}x_1^{k-1}\qquad \mbox{and} \qquad I_{e_1}(x_1^k)=\frac{1}{k+1}x_1^{k+1}.$$
It is easy to verify that $\partial_{e_1}I_{e_1}(x_1^l)=x_1^l$,that the operator $I_{e_1}$ increases the degrees by $1$ and that $I_{-e_1}=-I_{e_1}$.
\end{proof}

For each $k\in\N_0$ and each ordering $\alpha_1,\alpha_2,\dots\alpha_l$ of the positive roots we define the operator 
$$P_{\alt}I_{\alpha_1}I_{\alpha_2}\dots I_{\alpha_l}:\Pol^k_{\sym}(\car)\to \Pol^{k+\vert\Phi^+\vert}_{\alt}(\car).$$
This operator is linear, its range consists of alternating polynomials which have degrees less than or equal to $k+l$ since by Proposition \ref{propgradoIrightinverse} each antiderivative operator $I_{\alpha_i}$ increases the degree of polynomials by $1$. 

\begin{thm}\label{teocompdeI}
The operator $P_{\alt}I_{\alpha_1}I_{\alpha_2}\dots I_{\alpha_l}$ is the operator $D^*_{\Delta}$, that is,
$$I_{\Delta}=D_{\Delta}^*=\Delta(\partial)^{-1}=P_{\alt}I_{\alpha_1}I_{\alpha_2}\dots I_{\alpha_l}.$$
\end{thm}

\begin{proof}
We are going to prove that $P_{\alt}I_{\alpha_1}I_{\alpha_2}\dots I_{\alpha_l}$ is equal to $\Delta({\partial})^{-1}$. We show that $P_{\alt}I_{\alpha_1}I_{\alpha_2}\dots I_{\alpha_l}$ is a right inverse of the operator
$$\Delta(\partial):\Pol^{k+l}_{\alt}(\car)\to\Pol^k_{\sym}(\car).$$
Since $\Delta(\partial)$ is bijective from this it follows that $P_{\alt}I_{\alpha_1}I_{\alpha_2}\dots I_{\alpha_l}$ is a two-sided inverse of $\Delta(\partial)$. Therefore, we need to verify that 
$$\left(\prod_{i=1}^l\partial_{\alpha_i}\right)P_{\alt}I_{\alpha_1}I_{\alpha_2}\dots I_{\alpha_l}=\Id.$$
Consider an $f\in\Pol^k_{\sym}(\car)$ and evaluate 
\begin{align*}
\left(\prod_{i=1}^l\partial_{\alpha_i}\right)&P_{\alt}I_{\alpha_1}I_{\alpha_2}\dots I_{\alpha_l}f=\\&=\left(\prod_{i=1}^l\partial_{\alpha_i}\right)\frac{1}{\vert\W\vert}\sum_{w\in\W}\epsilon(w)(w\cdot(I_{\alpha_1}I_{\alpha_2}\dots I_{\alpha_l})f)\\
&=\left(\prod_{i=1}^l\partial_{\alpha_i}\right)\frac{1}{\vert\W\vert}\sum_{w\in\W}\epsilon(w)I_{w(\alpha_1)}I_{w(\alpha_2)}\dots I_{w(\alpha_l)}(w\cdot f)\\
&=\frac{1}{\vert\W\vert}\sum_{w\in\W}\epsilon(w)\left(\prod_{i=1}^l\partial_{\alpha_i}\right)I_{w(\alpha_1)}I_{w(\alpha_2)}\dots I_{w(\alpha_l)}(f)\\
\end{align*}
\begin{align*}
\phantom{\left(\prod_{i=1}^l\partial_{\alpha_i}\right)}&=\frac{1}{\vert\W\vert}\sum_{w\in\W}\epsilon(w)\left(\epsilon(w)\prod_{i=1}^l\partial_{w(\alpha_i)}\right)I_{w(\alpha_1)}I_{w(\alpha_2)}\dots I_{w(\alpha_l)}(f)\\
&=\frac{1}{\vert\W\vert}\sum_{w\in\W}\left(\prod_{i=1}^l\partial_{w(\alpha_i)}\right)I_{w(\alpha_1)}I_{w(\alpha_2)}\dots I_{w(\alpha_l)}(f)\\
&=\frac{1}{\vert\W\vert}\sum_{w\in\W}\partial_{w(\alpha_l)}\dots\partial_{w(\alpha_2)}\partial_{w(\alpha_1)}I_{w(\alpha_1)}I_{w(\alpha_2)}\dots I_{w(\alpha_l)}(f)\\
&=\frac{1}{\vert\W\vert}\sum_{w\in\W}f=f,
\end{align*}
where in the third equality we used that $f$ is symmetric, in the forth equality we used the property $w\cdot \Delta(\partial)=\epsilon(w)\Delta(\partial)$ of Lemma \ref{lemweylfifferential} and in the sixth equality we used the fact that $\partial_{\alpha}I_{\alpha}=\Id$ of Proposition \ref{propgradoIrightinverse}. 
\end{proof}

\begin{cor}\label{coropadjointdim1}
In the case of the one dimensional Cartan algebras $\car\simeq \R$ we have $I_{\Delta}=I_\alpha$ where $\alpha$ is the only positive root.
\end{cor}

\begin{proof}
This follows from Theorem \ref{teocompdeI} by noting that
$$I_{\Delta}=\frac{1}{2}(I_{\alpha}-I_{-\alpha})=\frac{1}{2}(I_{\alpha}+I_{\alpha})=I_{\alpha}.$$ 
We give an alternative proof, it is enough to verify the identity $D_{\Delta}^*=I_{\Delta}=I_{\alpha}$ in the monomials $x^k$ and $x^l$. We have 
$$D_{\Delta}(x^k)=\frac{1}{x}x^k=x^{k-1}\qquad\mbox{and}\qquad I_{\Delta}(x^l)=\int_0^{x}t^ldt=\frac{1}{l+1}x^{l+1}.$$
It is easy to verify that $\left[x^{k-1},x^l\right]=\left[x^k,\frac{1}{l+1}x^{l+1}\right]$. If $k\neq l+1$ then both side are zero and if $k=l+1$ then both sides are equal to $l!$. 
\end{proof}

The following values of $I_{\Delta}$ on the functions $P_{\sym}q^k_a$ hold. 

\begin{prop}\label{propcaractenexp}
For $k\in \N_0$ and $a\in \car$ such that $\Delta(a)\neq 0$ the identity 
$$I_{\Delta}(P_{\sym}q_a^{k})=\frac{1}{\Delta(a)} r_a^{k+\vert\Phi^+ \vert}$$
holds.
\end{prop}

\begin{proof}
Using the fact that $\langle\partial,\alpha\rangle e^{\langle x, a\rangle}=\langle \alpha, a\rangle e^{\langle x, a\rangle}$ we obtain the following well known identity for power series 
$$ \Delta(\partial)\left(\frac{1}{\vert\mathcal{W}\vert}\sum_{w\in\mathcal{W}}\epsilon(w)e^{\langle x, w(a)\rangle}\right)=\Delta(a)\sum_{w\in\mathcal{W}}e^{\langle x, w(a)\rangle}.$$
If we take polynomial parts by applying the truncation operator we get
$$\Delta(\partial)\left(r_a^{k+\vert\Phi^+\vert}\right)=\Delta(a)P_{\sym}q_a^{k}.$$ 
If we apply $\Delta(\partial)^{-1}$ on both sides in the case $\Delta(a)\neq 0$ we obtain the formula stated in the theorem.
\end{proof}

\subsection{Characterization of $D^*_{\Delta}$ in the case of type \textrm{A} root systems}\label{secadjdivA}
We compute the adjoint $I_{\Delta}=D_{\Delta}^*$ in the case of type \textrm{A} root systems by expressing $D_{\Delta}$ in bases of symmetric and antisymmetric polynomials and taking the transpose of this matrix. 

We start with a review of some facts about symmetric polynomials and their relation to the apolar inner product. Let $\N_0$ stand for the non negative integers. For $\mu\in \N_0^n$ we denote the monomials as usual with $x^{\mu}=x_1^{\mu_1}\dots x_n^{\mu_n}$ and we use the notations $\mu!=\mu_1!\dots\mu_n!$ and $\vert\mu\vert=\mu_1+\dots+\mu_n$. The set $\Pi$ of partitions is defined as
$$\Pi=\{\lambda\in \N_0^n:\lambda_1\geq\lambda_2\geq\dots\geq\lambda_n\}.$$
For $k\in\N_0$ we denote with
$$\Pi^k=\{\lambda\in \Pi:\vert\lambda\vert\leq k \}$$
the partitions of total size not greater than $k$.
We set 
$$\delta=(n-1,n-2,\dots,0).$$ 
For $\lambda\in\Pi$, if $(x_1,\dots,x_n)$ are the eigenvalues of a hermitian matrix $x$ we define 
$$\chi_{\lambda}(x) = s_{\lambda}(x_1,\dots,x_n),$$ 
where $s_{\lambda}$ is a Schur polynomial. These polynomials are defined by
$$s_{\lambda}(x_1,\dots,x_n)=\frac{a_{\lambda+\delta}(x_1,\dots,x_n)}{a_{\delta}(x_1,\dots,x_n)},$$
where 
$$a_{\mu}(x_1,\dots,x_n)=\dett\left[x_i^{\mu_j}\right]_{i,j=1}^n.$$
Note that $a_{\delta}(x_1,\dots,x_n)=\Delta(x_1,\dots,x_n)$ is the Vandermonde determinant.

\begin{rem}\label{remark}
The irreducible polynomial representations of the general linear group are labelled by Young diagrams, which we think of as vectors $\lambda \in\Pi$. The character of the $\lambda$-representation is given by $\chi_{\lambda}$, the Schur polynomial evaluated at the eigenvalues of an invertible matrix.
\end{rem}

For $\mu\in\Pi$ we denote with $m_{\mu}$ the monomial symmetric polynomial which are defined as the sum of $x^{\lambda}$ where $\lambda$ ranges over distinct permutations of $\mu$, thus, $m_{\mu}$ is the sum of $\vert\mathcal{W\cdot\mu}\vert=\vert S_n\cdot\mu\vert$ distinct monomials. Here $\vert S_n\cdot\mu\vert$ is the cardinality of the Weyl orbit of $\mu$. 

Schur polynomials can be expressed as linear combinations of monomial symmetric functions with non-negative integer coefficients $K_{\lambda\mu}$ called Kostka numbers
$$s_{\lambda }=\sum _{\mu }K_{\lambda \mu }m_{\mu },$$
see Proposition 4.4.3 in \cite{sagan}. The Kostka numbers $K_{\lambda\mu}$ are given by the number of semi-standard Young tableaux of shape $\lambda\in\Pi$ and weight $\mu\in\Pi$. References for Kostka numbers are Section VI.1 in \cite{mcdonald} and Theorem 2.11.2 in \cite{sagan} for the representation theoretic aspect. Therefore, noting that $\Delta=a_{\delta}$, we have for $\lambda\in\Pi$
\begin{align}\label{eqdeltakostka}
D_{\Delta}a_{\lambda+\delta}=\frac{a_{\lambda+\delta}}{a_{\delta}}=s_{\lambda}=\sum_{\mu}K_{\lambda\mu}m_{\mu}.
\end{align}
It is known that $(m_{\mu})_{\mu\in\Pi^k}$ and $(a_{\lambda+\delta})_{\delta\in\Pi^k}$ form algebraic bases of $\Pol^k_{\sym}(\R^n)$ and $\Pol^{k+l}_{\alt}(\R^n)$ respectively, where $l=\vert\Phi^+\vert=\frac{n(n-1)}{2}$.
Therefore
$$\left[a_{\lambda+\delta},a_{\lambda+\delta}\right]=n!(\lambda+\delta)!\qquad\mbox{and}\qquad \left[m_{\mu},m_{\mu}\right]=\vert S_n\cdot\mu\vert \mu!.$$
We scale these polynomials to be of unit norm and define for $\lambda\in\Pi^k$ 
$$a'_{\lambda+\delta}=\frac{1}{\sqrt{n!(\lambda+\delta)}}a_{\lambda+\delta}\qquad\mbox{and}\qquad m'_{\mu}=\frac{1}{\sqrt{\vert S_n \cdot \mu\vert \mu!}}m_{\mu}.$$
Thus, for $k\in \N$ the set $(a'_{\lambda+\delta})_{\lambda\in\Pi^k}$ is an orthonormal basis of $\Pol_{\alt}^{k+l}(\R^n)$ and the set $(m'_{\mu})_{\mu\in\Pi^k}$ is an orthonormal basis of $\Pol_{\sym}^{k}(\R^n)$. 

We obtain the following characterization of the operator $I_{\Delta}$ given by its matrix coefficients.

\begin{prop}\label{thmmatrixcoef}
For $k\in \N$ the operator $I_{\Delta}=D_{\Delta}^*:\Pol^{k}_{\sym}(\R^n)\to\Pol^{k+l}_{\alt}(\R^n)$ is given by
$$I_{\Delta}(m_{\mu})=\sum_{\lambda\in\Pi}\frac{\vert S_n \cdot \mu\vert \mu !}{n!(\lambda+\delta)!}K_{\lambda\mu}a_{\lambda+\delta}$$
for $\mu\in\Pi^k$.

\end{prop}

\begin{proof}
By equation (\ref{eqdeltakostka}) for $\lambda \in\Pi^k$
$$D_{\Delta}a_{\lambda+\delta}=\sum_{\mu}K_{\lambda\mu}m_{\mu}.$$
Therefore, 
$$D_{\Delta}a'_{\lambda+\delta}=\sum_{\mu}K'_{\lambda\mu}m'_{\mu},$$
with
$$K'_{\lambda\mu}=\frac{\sqrt{\vert S_n\cdot \mu\vert \mu!}}{\sqrt{n!(\lambda+\delta)!}}K_{\lambda\mu}.$$
Thus, for $\mu\in\Pi^k$
$$D^*_{\Delta}m'_{\mu}=\sum_{\lambda}K'_{\lambda\mu}a'_{\lambda+\mu},$$
and this implies that 
$$D^*_{\Delta}m_{\mu}=\sum_{\lambda}\frac{\vert S_n \cdot \mu\vert \mu !}{n!(\lambda+\delta)!}K_{\lambda\mu}a_{\lambda+\delta}.$$
\end{proof}

\section{The projection of an orbital measure to the Cartan algebra}\label{sec1}

For $a\in \car$ let $\nu_a$ be the orbital measure on $\Or_a$ and define the measure $\mu_a$ on the Cartan algebra as $\mu_a=\pr_*(\nu_a)$ where $\pr:\g\to\car$ is the orthogonal projection. The measure $\mu_a$ was characterized in \cite{zubov,olshanski,faraut2}, it can also be characterized as a Duistermaat-Heckman measure in the context of Hamiltonian torus actions, see Section $5$ in the book \cite{gls} and the references therein. In this section we obtain an alternative characterization and provide a formula for the moments in the case of type \textrm{A} root systems. 

\subsection{General characterization of the measure $\mu_a$.} We start with an equation satisfied by the measure $\mu_a$.

\begin{prop}\label{propprojmeasure}
For $a\in\car$ such that $\Delta(a)\neq 0$ the measure $\mu_a$ satisfies
\begin{align}\label{equationprojmeasure}
\frac{\left[\Delta,\Delta\right]}{\Delta(a)\Delta(x)}  \frac{1}{\vert\W\vert}\sum_{w\in\W}\epsilon(w)e^{\langle w(a),x\rangle}=\int_{\car} e^{\langle c,x\rangle}d\mu_a(x).
\end{align}
\end{prop}

\begin{proof}
Since $\langle \Ad_gx,y\rangle =\langle \pr(\Ad_gx),y \rangle$ for $y\in\car$, $x\in\g$ and $g\in G$ it is easy to very that the measure $\mu_a=\pr_*(\nu_a)$ satisfies 

$$\int_G e^{\langle \Ad_ga,x\rangle}dg=\int_G e^{\langle \pr(\Ad_ga),x\rangle}dg=\int_{\car} e^{\langle c,x\rangle}d\mu_a(x).$$
Thus, the orbital integral (\ref{eqhc}) implies that the  measure $\mu_a$ satisfies

$$\F_a(x)=\frac{\left[\Delta,\Delta\right]}{\Delta(a)\Delta(x)}  \frac{1}{\vert\W\vert}\sum_{w\in\W}\epsilon(w)e^{\langle w(a),x\rangle}=\int_{\car} e^{\langle c,x\rangle}d\mu_a(x).$$
\end{proof}

\begin{thm}\label{teocharact1}
For $a\in\car$ such that $\Delta(a)\neq 0$ the measure $\mu_a$ is characterized by 

\begin{align}\label{eqcaract1}
\int_{\car}f(c)d\mu_a(c)=\frac{\left[\Delta,\Delta\right]}{\Delta(a)}\ev_{a}\left(I_{\Delta}P_{\sym}f\right),
\end{align}
for every $f\in\Pol(\car)$.
\end{thm}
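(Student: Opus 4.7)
Starting from Proposition \ref{propprojmeasure} and applying a sequence of apolar-adjoint manipulations, my plan proceeds as follows. First, I would express $\int_{\car}f(c)\,d\mu_a(c)$ as an apolar pairing: since the differential operator $f(\partial)$ acts on $e^{\langle c,x\rangle}$ by multiplication by $f(c)$, interchanging $f(\partial)$ with the integration in $c$ gives
\begin{equation*}
\int_{\car}f(c)\,d\mu_a(c)=\left[f(x),\F_a(x)\right]_x,
\end{equation*}
where the apolar inner product is taken in the $x$-variable and, for $f\in\Pol^k(\car)$ with $k=\deg f$, only the truncation $F^k\F_a$ contributes.

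Second, I would observe that $\mu_a$ is $\W$-invariant (the projection $\pr:\g\to\car$ intertwines the adjoint action of $N_G(T)$ on $\g$ with the $\W$-action on $\car$, and $\nu_a$ is $G$-invariant), so $\F_a$ is a symmetric power series. Mimicking the proof of Lemma \ref{lemaproj} with $P_{\sym}$ in place of $P_{\alt}$ then gives $\left[f,\F_a\right]=\left[P_{\sym}f,\F_a\right]$.

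The heart of the argument is to introduce the discriminant. By Proposition \ref{propprojmeasure}, $\Delta(x)\F_a(x)=\frac{\left[\Delta,\Delta\right]}{\Delta(a)\vert\W\vert}\sum_{w}\epsilon(w)e^{\langle w(a),x\rangle}$, which is alternating in $x$; after truncation at degree $k+\vert\Phi^+\vert$ one has $F^k\F_a=D_\Delta\!\left(F^{k+\vert\Phi^+\vert}(\Delta\cdot\F_a)\right)$. Passing $D_\Delta$ to its adjoint $I_\Delta$ via Proposition \ref{padjointdiscr} then yields
\begin{equation*}
\left[P_{\sym}f,\F_a\right]=\left[I_\Delta P_{\sym}f,\;\Delta\cdot\F_a\right].
\end{equation*}

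Finally, I would evaluate the right-hand side using the reproducing property $\left[h(x),e^{\langle b,x\rangle}\right]_x=h(b)$ (the truncated variant of Proposition \ref{preprodq} obtained by direct computation on monomials) at each $b=w(a)$. Since $h:=I_\Delta P_{\sym}f$ is alternating by construction, $h(w(a))=\epsilon(w)h(a)$, so $\sum_w\epsilon(w)^2=\vert\W\vert$ collapses everything to the claimed $\frac{\left[\Delta,\Delta\right]}{\Delta(a)}\ev_a(I_\Delta P_{\sym}f)$. The only real difficulty is the bookkeeping of truncations so that each apolar identity is applied to genuine polynomials of matching degree; conceptually, the adjoint identity $D_\Delta^*=I_\Delta$ carries the entire argument.
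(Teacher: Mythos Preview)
Your proof is correct and follows essentially the same route as the paper's: truncate equation~(\ref{eqprojmeasure}) to finite degree, pair with $P_{\sym}f$ in the apolar inner product, use $D_\Delta^*=I_\Delta$, and finish with the reproducing property. The only cosmetic difference is that the paper packages the truncated alternating exponential sum as the kernel $r_a^{k+l}=P_{\alt}q_a^{k+l}$ and invokes Lemma~\ref{lemaproj} to pass from $r_a^{k+l}$ to $q_a^{k+l}$, whereas you expand the Weyl sum and collapse it directly via $h(w(a))=\epsilon(w)h(a)$; these are the same computation.
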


\begin{proof}
We write $\eta=\left[\Delta,\Delta\right]/\Delta(a)$ and $l=\vert\Phi^+\vert$. If we take the part of degree not greater than $k\in\N_0$ in $x$ of equation (\ref{equationprojmeasure}) of Proposition \ref{propprojmeasure} then we obtain

$$\eta\frac{r_a^{k+l}(x)}{\Delta(x)}=\int_{\car}q_c^k(x)d\mu_a(c).$$
If for a polynomial $f\in\Pol^k(\car)$ we take the inner product of both sides with $f_{\sym}=P_{\sym}f$ then in the LHS we get 

\begin{align*}
\left[ \eta D_{\Delta}r_a^{k+l} ,f_{\sym}\right]&=\eta.\left[ D_{\Delta}r_a^{k+l} ,f_{\sym}\right]\\
&=\eta.\left[ r_a^{k+l} ,I_{\Delta}f_{\sym}\right]\\
&=\eta.\left[ q_a^{k+l} ,I_{\Delta}f_{\sym}\right]\\
&=\eta.\ev_{a}\left(I_{\Delta}f_{\sym}\right)
\end{align*}
where we used Definition \ref{definicion} in the second equality, Lemma \ref{lemaproj} in the third and Proposition \ref{preprodr} in the final equality. On the other hand, in the LHS we get

\begin{align*}
\left[\int_{\car}q_c^kd\mu_a(c),f_{\sym}\right]
&=\int_{\car}\left[ q_c^k,f_{\sym}\right]d\mu_a(c)\\
&=\int_{\car}\ev_c(f_{\sym})d\mu_a(c)\\
&=\int_{\car}f_{\sym}(c)d\mu_a(c)\\
&=\int_{\car}f(c)d\mu_a(c),
\end{align*}
where we used Proposition \ref{preprodq} in the second equality and the symmetry of the measure $\mu_a$ in the final equality. Hence, the conclusion of the theorem holds.
\end{proof}

An alternative characterization of $I_{\Delta}=\Delta(\partial)^{-1}$ is provided by the next corollary.

\begin{cor}
The operator $I_{\Delta}=\Delta(\partial)^{-1}:\Pol_{\sym}^k(\car)\to \Pol_{\alt}^{k+l}(\car)$ is given by
\begin{align*}
\left(\Delta(\partial)^{-1}f\right)(x)=\frac{\Delta(x)}{\left[\Delta,\Delta\right]}\int_{\car}f(y)d\mu_x(y)
\end{align*}
for $f\in \Pol^k_{\sym}(\car)$.
\end{cor}

\begin{proof}
We can rewrite (\ref{eqcaract1}) as 
\begin{align*}
\left(I_{\Delta}f\right)(a)=\frac{\Delta(a)}{\left[\Delta,\Delta\right]}\int_{\car}f(c)d\mu_a(c)
\end{align*}
for every $f\in\Pol_{\sym}(\car).$ If we change the symbols $a$ and $c$ to $x$ and $y$ we obtain the equation stated in the corollary.
\end{proof}

\begin{ex}
Let us check that the measure $\mu_a$ characterized in this way is a probability measure. If we take as polynomial $f=\mathbf{1}$ then by Theorem \ref{teocharact1} and Proposition \ref{pint1}
$$\int_{\car}1d\mu(x)=\frac{\left[\Delta,\Delta\right]}{\Delta(a)}\ev_{a}\left(I_{\Delta}\mathbf{1}\right)=\frac{\left[\Delta,\Delta\right]}{\Delta(a)}\ev_{a}\left(\frac{\Delta}{\left[\Delta,\Delta\right]}\right)=\frac{\left[\Delta,\Delta\right]}{\Delta(a)}\frac{\Delta(a)}{\left[\Delta,\Delta\right]}=1.$$
\end{ex}

\begin{ex}
Consider the case in which $G=\SU(2)$. Using Corollary \ref{coropadjointdim1} and the identification $\car\simeq\R$ given by $(x,-x)\mapsto x$ 
\begin{align*}
\int_{\car}f(x)d\mu_a(x)&=\eta.\ev_{a}\left(I_{\Delta}P_{\sym}f\right)\\
&=\frac{1}{a}\int^{a}_0\frac{1}{2}\left( f(a- t)+f(-a+ t)\right)dt.
\end{align*}
It is easy to see that the density of the measure $\mu_a$ is given by the indicator function in the interval $\left[a,-a\right]$ of $\car$. Since adjoint orbits are spheres, that is, $\Or_a=\Ad_{\SU(2)}(a)\simeq S^2$ this is the fact known to Archimedes  that the projection of the surface measure of a sphere to an axis is given by an indicator function.  
\end{ex}

\begin{rem}
By Theorem \ref{teocompdeI} equation (\ref{eqcaract1}) can de written as
\begin{align*}
\int_{\car}f(x)d\mu_a(x)&=\frac{\left[\Delta,\Delta\right]}{\Delta(a)}\ev_{a}\left(I_{\Delta}P_{\sym}f\right)\\
&=\frac{\left[\Delta,\Delta\right]}{\Delta(a)}\ev_{a}\left(P_{\alt}I_{\alpha_1}I_{\alpha_2}\dots I_{\alpha_l}P_{\sym}f\right)\\
&=\frac{\left[\Delta,\Delta\right]}{\Delta(a)}\sum_{w\in\W}\epsilon(w)\left(I_{\alpha_1}I_{\alpha_2}\dots I_{\alpha_l}P_{\sym}f\right)(w^{-1}a).
\end{align*}
Each term is a constant times
$$\ev_{w^{-1}a}\left(I_{\alpha_1}I_{\alpha_2}\dots I_{\alpha_l}f\right),$$
for $f\in\Pol_{\sym}(\car)$. This is a linear functional over $\Pol_{\sym}(\car)$ which is given by integration with a piecewise polynomial measure. We give an informal description of this integration procedure. A term $I_{\alpha_1}I_{\alpha_2}\dots I_{\alpha_l}f(w^{-1}a)$ consists of a signed integration over the interval $[w^{-1}a,P_{\alpha_1^{\perp}}w^{-1}a]$ of a function which on each of its point $a_2$ is the signed integral over the interval $[a_2,P_{\alpha_2^{\perp}}a_2]$, and continuing in this way until we reach integrals of $f$ over intervals of the form $[a_l,P_{\alpha_l^{\perp}}a_l]$. More formally and in the case $\vert\Phi^+\vert=3$ we get 
\begin{align*}
I_{\alpha_1}I_{\alpha_2}I_{\alpha_3}f(w^{-1}a)&=\int_0^{\langle w^{-1}a,\alpha_1^{\circ}\rangle}I_{\alpha_2}I_{\alpha_3}f(w^{-1}a-\alpha_1t_1)dt_1\\
&=\int_0^{\langle w^{-1}a,\alpha_1^{\circ}\rangle}\int_0^{\langle w^{-1}a-\alpha_1t_1,\alpha_2^{\circ}\rangle}I_{\alpha_3}f(w^{-1}a-\alpha_1t_1-\alpha_2t_2)dt_2dt_1\\
&=\int_0^{\langle w^{-1}a,\alpha_1^{\circ}\rangle}\int_0^{\langle w^{-1}a-\alpha_1t_1,\alpha_2^{\circ}\rangle}\int_0^{\langle w^{-1}a-\alpha_1t_1-\alpha_2t_2,\alpha_3^{\circ}\rangle}\\
&\phantom{=}f(w^{-1}a-\alpha_1t_1-\alpha_2t_2-\alpha_3t_3)dt_3dt_2dt_1.
\end{align*}
Perhaps it would be convenient to consider measures on $\car$ which are pushforwards by linear maps of signed uniform measures on polytopes. Also, a simplifying root geometric framework would be necessary to handle the cancellations given by the operator $P_{\alt}$ and the symmetries of the function $f$.
\end{rem}

\begin{rem}
In Proposition \ref{propcaractenexp} we showed that
$$I_{\Delta}(P_{\sym}q_b^{k})=\frac{1}{\Delta(b)} r_b^{k+l}.$$
Therefore, if take $f=q_b^k$ in Theorem \ref{teocharact1} we get
\begin{align*}
\int_{\car}q_b^k (x)d\mu_a(x)&=\frac{\left[\Delta,\Delta\right]}{\Delta(a)}\ev_{a}\left(I_{\Delta}P_{\sym}q_b^k\right)\\
&=\frac{\left[\Delta,\Delta\right]}{\Delta(a)\Delta(b)}r_b^{k+l}(a)
\end{align*}
which was the main assumption of the theorem.
\end{rem}

\subsection{Moment characterization of $\mu_a$.} We provide a formula for the moments of the measure $\mu_a$ in the case of type \textrm{A} root systems. We will use the HCIZ integral (\ref{formuHCIZ}) presented in Section \ref{orbital integrals}.

\begin{thm}\label{teomomentos}
In the case $G=\U(n)$ the measure $\mu_b$ has the following characterization by moments. For a partition $\eta\in\Pi$   
$$\int_{\R^n}m_{\eta}(x)d\mu_b(x)=\sum_{\lambda\in\Pi}\frac{\delta!\vert S_n \cdot \eta\vert \eta !}{(\lambda+\delta)!}K_{\lambda\eta}s_{\lambda}(b),$$
and the moment of the monomial $x^{\eta}$ is
$$\int_{\R^n}x^{\eta}d\mu_b(x)=\sum_{\lambda\in\Pi}\frac{\delta!\eta !}{(\lambda+\delta)!}K_{\lambda\eta}s_{\lambda}(b).$$
\end{thm}

\begin{proof}
We have the identities  
$$\left[\Delta,\Delta\right]=\left(\prod_{p=1}^{n}p!\right)\qquad\mbox{and}\qquad \left(\prod_{p=1}^{n-1}p!\right)=\delta!,$$
where $\delta=(n-1,n-2,\dots,1)$. For a monomial symmetric polynomial $m_{\eta}$ Theorem \ref{teocharact1} and Proposition \ref{thmmatrixcoef} imply that  
\begin{align*}
\int_{\R^n}m_{\eta}(x)d\mu_b(x)&=\frac{\left[\Delta,\Delta\right]}{\Delta(b)}\ev_{b}\left(I_{\Delta}m_{\eta}\right)\\
&=\frac{\left[\Delta,\Delta\right]}{\Delta(b)}\ev_{b}\left(\sum_{\lambda}\frac{\vert S_n \cdot \eta\vert \eta !}{n!(\lambda+\delta)!}K_{\lambda\eta}a_{\lambda+\delta}\right)\\
&=\frac{\left[\Delta,\Delta\right]}{a_{\delta}(b)}\left(\sum_{\lambda}\frac{\vert S_n \cdot \eta\vert \eta !}{n!(\lambda+\delta)!}K_{\lambda\eta}a_{\lambda+\delta}(b)\right)\\
&=\left[\Delta,\Delta\right]\sum_{\lambda}\frac{\vert S_n \cdot \eta\vert \eta !}{n!(\lambda+\delta)!}K_{\lambda\eta}s_{\lambda}(b)\\
&=\sum_{\lambda}\frac{\delta!\vert S_n \cdot \eta\vert \eta !}{(\lambda+\delta)!}K_{\lambda\eta}s_{\lambda}(b),
\end{align*}
thus the first equation of the theorem holds. Also, it is easy to check that for a monomial $x^\eta$
$$P_{\sym}(x^{\eta})=\frac{1}{\vert S_n\cdot\eta\vert}m^{\eta}(x).$$
Hence, the second equation of the theorem follows.
\end{proof}
We denote $\overline{\mathrm{1}}=(1,1,\dots,1)$.

\begin{cor}\label{corodimensiones}
For a partition $\eta\in\Pi$ 
$$\sum_{\lambda\in\Pi}\frac{\delta!\eta !}{(\lambda+\delta)!}K_{\lambda\eta}\dimension(V_{\lambda})=1,$$
where $\dimension(V_{\lambda})$ is the dimension of representation of $\U(n)$ of weight $\lambda$. 
\end{cor}

\begin{proof}
We apply Theorem \ref{teomomentos} in the case $b=\overline{\mathrm{1}}$. In this case the measure $\mu_{\overline{\mathrm{1}}}$ is the Dirac mass at $\overline{\mathrm{1}}$. Also, by Remark \ref{remark} $s_{\lambda}(\overline{\mathrm{1}})$ is the dimension of the representation with highest weight $\lambda$ which is given by the trace of the identity operator in representation space.
\end{proof}

\begin{rem}
For $k\in\N$ we can apply Corollary \ref{corodimensiones} to the partition $\eta=(k)$ of the symmetric representation $V_{(k)}$. Note that $K_{\lambda\lambda}=1$ for all partitions $\lambda$ by Proposition 4.4.3 in \cite{sagan}. Also, there are no partitions which strictly dominate $(k)$, and $\dimension(V_{(k)})=\binom{n+k-1}{k}$. We see that  
$$\frac{\delta!(k)!}{((k)+\delta)!}K_{(k)(k)}\dimension(V_{(k)})=\frac{\delta!(k)!}{((k)+\delta)!}\binom{n+k-1}{k}=1.$$
\end{rem}

\section{The radial part of the convolution of orbital measures}\label{sec2}

Let $\nu_a$ and $\nu_b$ be the orbital measures on $\Or_a$ and $\Or_b$ for $a,b\in\car$. We characterize the radial part $\nu_{a,b}$ of the convolution of measures $\nu_a*\nu_b$. This was done in previous research in \cite{faraut,itzykson,cmz} using mainly Fourier analytic techniques. We start with an equation satisfied by the radial measure $\nu_{a,b}$.

\begin{prop}\label{propeqconvmeasure}
For $a,b\in\car$ such that $\Delta(a)\neq 0$ and $\Delta(b)\neq 0$ the radial measure $\nu_{a,b}$ satisfies 
\begin{align}\label{eqconvmeasure}
\frac{\left[\Delta,\Delta\right]}{\Delta(a)\Delta(b)}\frac{1}{\Delta(x)} & \left(\frac{1}{\vert\W\vert}\sum_{w\in\W}\epsilon(w)e^{\langle w(a),x\rangle}\right)\left( \frac{1}{\vert\W\vert}\sum_{w\in\W}\epsilon(w)e^{\langle w(b),x\rangle}\right)\\
&=\int_{\car}\frac{1}{\Delta(x)} \left(\frac{1}{\vert\W\vert}\sum_{w\in\W}\epsilon(w)e^{\langle w(x),x\rangle}\right) d\nu_{a,b}(x).\nonumber
\end{align}
\end{prop}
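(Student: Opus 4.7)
The plan is to Fourier--Laplace transform the convolution $\nu_a * \nu_b$ in two different ways and equate the results, then clear denominators so that the Harish-Chandra formula matches the stated identity.

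First I would observe that since $\nu_a$ and $\nu_b$ are $\Ad$-invariant probability measures on $\g$, their convolution $\nu_a * \nu_b$ (formed using the additive structure of $\g$) is again $\Ad$-invariant and supported on the algebraic sum of the orbits. By Fubini applied to the double orbital integral,
$$\int_{\g} e^{\langle y,x\rangle}\, d(\nu_a * \nu_b)(y) = \int_G \int_G e^{\langle \Ad_g a + \Ad_h b,\, x\rangle}\, dg\, dh = \F_a(x)\F_b(x).$$

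Second, the radial part $\nu_{a,b}$ is the measure on $\car$ (taken to be $\W$-symmetric, or equivalently concentrated on a chosen Weyl chamber) encoding the disintegration of $\nu_a * \nu_b$ along the orbit map $\g\to\car/\W$; since every $\Ad$-invariant probability measure on $\g$ is an average of the extremal measures $\nu_c$, one has
$$\nu_a * \nu_b = \int_{\car} \nu_c\, d\nu_{a,b}(c).$$
Applying the Fourier--Laplace transform to both sides and using $\int e^{\langle y,x\rangle} d\nu_c(y) = \F_c(x)$ gives
$$\F_a(x)\F_b(x) = \int_{\car} \F_c(x)\, d\nu_{a,b}(c).$$

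Third, I would substitute the Harish-Chandra formula \eqref{eqhc} into each of the three transforms $\F_a$, $\F_b$, $\F_c$ in this identity. The left-hand side becomes
$$\frac{[\Delta,\Delta]^2}{\Delta(a)\Delta(b)\Delta(x)^2}\,\frac{1}{|\W|^2}\left(\sum_{w\in\W}\e(w)e^{\langle w(a),x\rangle}\right)\left(\sum_{w\in\W}\e(w)e^{\langle w(b),x\rangle}\right),$$
while the right-hand side becomes
$$\int_{\car} \frac{[\Delta,\Delta]}{\Delta(c)\Delta(x)}\,\frac{1}{|\W|}\sum_{w\in\W}\e(w)e^{\langle w(c),x\rangle}\, d\nu_{a,b}(c).$$
Multiplying both sides by $\Delta(x)/[\Delta,\Delta]$ cancels the unwanted factor of $\Delta(x)$ on the right and one of the factors of $[\Delta,\Delta]/\Delta(x)$ on the left, yielding exactly \eqref{eqconvmeasure}.

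The only nontrivial ingredient is the radial disintegration $\nu_a*\nu_b=\int_{\car}\nu_c\,d\nu_{a,b}(c)$, which should be regarded as the defining property of $\nu_{a,b}$; it follows from the standard fact that $\Ad$-invariant probability measures on $\g$ are in bijection with probability measures on $\car/\W$ via averaging along orbits, combined with the $\Ad$-invariance of $\nu_a*\nu_b$. Everything else is algebraic rearrangement of the Harish-Chandra formula, and in particular no Fourier analytic tools beyond the multiplicativity of the transform under convolution are needed.
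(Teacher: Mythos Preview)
Your proposal is correct and follows essentially the same route as the paper: establish $\F_a(x)\F_b(x)=\int_{\car}\F_c(x)\,d\nu_{a,b}(c)$, substitute the Harish-Chandra formula \eqref{eqhc} for each factor, and then multiply through by $\Delta(x)/[\Delta,\Delta]$. The only difference is cosmetic---the paper obtains the spherical-transform identity by citing Proposition~2.1 of \cite{faraut}, whereas you supply the disintegration argument directly.
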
 

\begin{proof}
By Proposition 2.1 in \cite{faraut} the radial measure $\nu_{a,b}$ of the convolution $\nu_a*\nu_b$ is given by the Fourier-Laplace transform as 
$$\F_a(z)\F_b(z)=\int_{\car}\F_c(z)d\nu_{a,b}(x).$$
If we express the Fourier-Laplace transform terms $\F_a,\F_b$ and $\F_c$ using the orbital integral formula (\ref{eqhc}), and if we multiply both sides by $\Delta(x)$ and divide both sides by $\left[\Delta,\Delta\right]$ then we obtain the equation (\ref{eqconvmeasure}) in the statement of the proposition. 
\end{proof}

The next lemma ensures that the main result of this section is well formulated.

\begin{lem} For an alternating polynomial $f\in\Pol_{\alt}$ the polynomial
$$g=\frac{1}{\vert\W\vert}\sum_{w\in\W}\epsilon(w)T_{w(a)}f$$
is symmetric.
\end{lem}

\begin{proof}
For $w'\in\W$ we compute
\begin{align*}
w'\cdot g&=\frac{1}{\vert\W\vert}\sum_{w\in\W}\epsilon(w)w'\cdot(T_{w(a)}f)=\frac{1}{\vert\W\vert}\sum_{w\in\W}\epsilon(w)(T_{w'w(a)}(w'\cdot f))\\
&=\frac{1}{\vert\W\vert}\sum_{w\in\W}\epsilon(w)(T_{w'w(a)}\epsilon(w') f)=\frac{1}{\vert\W\vert}\sum_{w\in\W}\epsilon(w'w)(T_{w'w(a)} f)\\
&=\frac{1}{\vert\W\vert}\sum_{w\in\W}\epsilon(w)(T_{w(a)} f)=g.
\end{align*}
\end{proof}

\begin{thm}\label{teocharact2}
For $a,b\in\car$ such that $\Delta(a)\neq 0$ and $\Delta(b)\neq 0$ the radial part $\nu_{a,b}$ of the convolution $\nu_a*\nu_b$ of orbital measures is characterized by

\begin{align}\label{eqcharact2}
\int_{\car}\frac{f_{\alt}(c)}{\Delta(c)}d\nu_{a,b}(c)=\frac{\left[\Delta,\Delta\right]}{\Delta(a)\Delta(b)}\ev_b \left(I_{\Delta}\left(\frac{1}{\vert\W\vert}\sum_{w\in\W}\epsilon(w)T_{w(a)}f_{\alt}\right)\right).
\end{align}
for every alternating polynomial $f_{\alt}\in\Pol_{alt}(\car)$.
\end{thm}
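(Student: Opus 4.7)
The plan is to mirror the proof of Theorem \ref{teocharact1}: take the degree $\le k$ part in $x$ of the polynomial identity (\ref{eqconvmeasure}) for $k\ge\deg f_\alt$, pair it with $f_\alt$ in the apolar inner product, and shuffle the operators to their adjoints using the results of Section \ref{secprel}. Throughout write $\eta_2=[\Delta,\Delta]/(\Delta(a)\Delta(b))$ and $l=\vert\Phi^+\vert$.

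Taking $F^k$ in $x$ of both sides of (\ref{eqconvmeasure}), the right-hand side becomes $\int_\car r_c^k(x)/\Delta(c)\,d\nu_{a,b}(c)$. For the left-hand side, expanding the product of the two Weyl sums and using Proposition \ref{padjtransl} to identify multiplication by $e^{\langle w(a),x\rangle}$ on truncated polynomials with $T^{*}_{w(a)}$, I obtain
$$F^{k+l}\!\left[\Bigl(\tfrac{1}{\vert\W\vert}\sum_{w\in\W}\epsilon(w)e^{\langle w(a),x\rangle}\Bigr)\!\Bigl(\tfrac{1}{\vert\W\vert}\sum_{w\in\W}\epsilon(w)e^{\langle w(b),x\rangle}\Bigr)\right]=\tfrac{1}{\vert\W\vert}\sum_{w\in\W}\epsilon(w)\,T^{*}_{w(a)}r_b^{k+l}.$$
The key algebraic identity is then $D_\Delta T^{*}_{w(a)}r_b^{k+l}=T^{*}_{w(a)}D_\Delta r_b^{k+l}$ for each $w$ individually, which I establish by factoring $r_b^{k+l}=\Delta\cdot D_\Delta r_b^{k+l}$ and pulling the homogeneous factor $\Delta$ out of the truncation $F^{k+l}$, using that $\deg\Delta=l$ gives $F^{k+l}[\Delta g]=\Delta\cdot F^k[g]$.

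With this substitution the truncated equation reads $\eta_2\cdot\tfrac{1}{\vert\W\vert}\sum_{w\in\W}\epsilon(w)\,T^{*}_{w(a)}D_\Delta r_b^{k+l}=\int_\car r_c^k(x)/\Delta(c)\,d\nu_{a,b}(c)$. Pairing both sides with $f_\alt\in\Pol^k_\alt(\car)$, the reproducing property of $r_c^k$ (Proposition \ref{preprodr}) collapses the right-hand side to $\int_\car f_\alt(c)/\Delta(c)\,d\nu_{a,b}(c)$, which is what we want. On the left, I transport each $T^{*}_{w(a)}$ to its adjoint $T_{w(a)}$, pull the sum outside the bracket, verify by a brief Weyl-reindexing argument that $\tfrac{1}{\vert\W\vert}\sum_{w\in\W}\epsilon(w)T_{w(a)}f_\alt$ is symmetric in $x$ (so that Proposition \ref{padjointdiscr} applies), convert $D_\Delta$ to $I_\Delta$, and finish with the reproducing property of $r_b^{k+l}$ to land on $\eta_2\,\ev_b\!\bigl(I_\Delta\bigl(\tfrac{1}{\vert\W\vert}\sum_{w\in\W}\epsilon(w)T_{w(a)}f_\alt\bigr)\bigr)$. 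The main obstacle I expect is the commutation identity for $D_\Delta$ and $T^{*}_{w(a)}$; once it is in place, the rest directly parallels the proof of Theorem \ref{teocharact1}.
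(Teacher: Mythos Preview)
Your proposal is correct and follows essentially the same route as the paper: truncate equation (\ref{eqconvmeasure}), pair with $f_{\alt}$, and move $T^*_{w(a)}$ and $D_\Delta$ to their adjoints $T_{w(a)}$ and $I_\Delta$ before invoking the reproducing property at $b$. The one organizational difference is that the paper writes the truncated left-hand side directly as $\varphi\cdot F^kM_{r_a^k}\bigl(r_b^{k+l}/\Delta\bigr)$---applying $D_\Delta$ to the $b$-factor \emph{before} bringing in the multiplication by $r_a^k$---so the commutation identity $D_\Delta T^*_{w(a)}r_b^{k+l}=T^*_{w(a)}D_\Delta r_b^{k+l}$ that you single out as the main obstacle never arises; your verification that $\tfrac{1}{|\W|}\sum_w\epsilon(w)T_{w(a)}f_{\alt}$ is symmetric is, on the other hand, a point the paper uses implicitly without comment.
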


\begin{proof}
We set $\varphi={{\left[\Delta,\Delta\right]}}/{\Delta(a)\Delta(b)}$ and $l=\vert\Phi^+\vert$. If we take the part of degree not greater than $k\in\N_0$ of equation (\ref{eqconvmeasure}) we get
$$\varphi . F^kM_{r_a^k}\frac{r_b^{k+l}(x)}{\Delta(x)}=\int_{\car}\frac{1}{\Delta(c)}r_c^k(x)d\nu_{a,b}(c).$$
For an alternating polynomial $f_{\alt}\in\Pol_{\alt}^k(\car)$ we take the inner product of both sides with $f_{\alt}$. On the LHS we obtain

\begin{align*}
\left[ \varphi . F^kM_{r_a^k}D_{\Delta}r_b^{k+l} ,f_{\alt}\right]
&=\varphi .\left[\frac{1}{\vert\W\vert}\sum_{w\in\W}\epsilon(w) F^kM_{q_{w(a)}^k}D_{\Delta}r_b^{k+l} ,f_{\alt}\right]\\
&=\frac{1}{\vert\W\vert}\sum_{w\in\W}\varphi .\left[\epsilon(w) F^kM_{q_{w(a)}^k}D_{\Delta}r_b^{k+l} ,f_{\alt}\right]\\
&=\frac{1}{\vert\W\vert}\sum_{w\in\W}\varphi .\left[ D_{\Delta}r_b^{k+l} ,\epsilon(w)T_{w(a)}f_{\alt}\right]\\
&=\varphi .\left[ D_{\Delta}r_b^{k+l} ,\frac{1}{\vert\W\vert}\sum_{w\in\W}\epsilon(w)T_{w(a)}f_{\alt}\right]\\
&=\varphi .\left[ r_b^{k+l} ,I_{\Delta}\left(\frac{1}{\vert\W\vert}\sum_{w\in\W}\epsilon(w)T_{w(a)}f_{\alt}\right)\right]\\
&=\varphi .\left[ q_b^{k+l} ,I_{\Delta}\left(\frac{1}{\vert\W\vert}\sum_{w\in\W}\epsilon(w)T_{w(a)}f_{\alt}\right)\right]\\
&=\varphi .\ev_b \left(I_{\Delta}\left(\frac{1}{\vert\W\vert}\sum_{w\in\W}\epsilon(w)T_{w(a)}f_{\alt}\right)\right)
\end{align*}
where we used the definition $r_a^k=P_{\alt}q_a^k$ in the first equality, Proposition \ref{padjtransl} in the third, Proposition \ref{definicion} in the fifth, Lemma \ref{lemaproj} in the sixth and Proposition \ref{preprodq} in the final equality. On the other hand, on the RHS we get
\begin{align*}
\left[\int_{\car}\frac{1}{\Delta(c)}r_c^k(x)d\nu_{a,b}(c),f_{\alt}\right]&=\int_{\car}\frac{1}{\Delta(c)}\left[r_c^k(x),f_{\alt}\right]d\nu_{a,b}(c)\\
&=\int_{\car}\frac{1}{\Delta(c)}\ev_c(f_{\alt})d\nu_{a,b}(c)\\
&=\int_{\car}\frac{1}{\Delta(c)} f_{\alt}(c)d\nu_{a,b}(c),
\end{align*}
where we used Proposition \ref{preprodr}  in the second equality. Combining the result of taking the inner product on both sides we obtain the equation in the statement of the theorem. Since all symmetric polynomials are of the form $f_{\alt}/\Delta$ for an alternating polynomial $f_{\alt}$ and since the measure $\nu_{a,b}$ is symmetric we see that (\ref{eqcharact2}) completely characterizes the measure $\nu_{a,b}$.
\end{proof}
We can provide a slight variation of the characterization in Theorem \ref{teocharact2} of the radial measure $\nu_{a,b}$.

\begin{cor}
The radial part $\nu_{a,b}$ of the convolution $\nu_a*\nu_b$ is characterized by

\begin{align*}
\int_{\car}g(c)d\nu_{a,b}(c)=\frac{\left[\Delta,\Delta\right]}{\Delta(a)\Delta(b)}\ev_b \left(I_{\Delta}\left(\frac{1}{\vert\W\vert}\sum_{w\in\W}\epsilon(w)T_{w(a)}(\Delta. P_{\sym}g)f\right)\right)
\end{align*}
for every polynomial $g\in\Pol(\car)$.
\end{cor}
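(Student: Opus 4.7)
The plan is to deduce this corollary directly from Theorem \ref{teocharact2} by the substitution $f_{\alt} := \Delta \cdot P_{\sym}g$. First I would check that this really does produce an alternating polynomial: since $\Delta$ is alternating under $\W$ and $P_{\sym}g$ is symmetric, their product transforms under $w \in \W$ by the sign $\epsilon(w)$, so $f_{\alt} \in \Pol_{\alt}(\car)$ and is a valid test polynomial for Theorem \ref{teocharact2}.

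Next I would compute the left-hand side of (\ref{eqcharact2}) under this substitution. The ratio $f_{\alt}(c)/\Delta(c)$ is just $P_{\sym} g(c)$, so
\begin{align*}
\int_{\car} \frac{f_{\alt}(c)}{\Delta(c)} d\nu_{a,b}(c) = \int_{\car} P_{\sym} g(c)\, d\nu_{a,b}(c) = \int_{\car} g(c)\, d\nu_{a,b}(c),
\end{align*}
where the last equality uses the $\W$-invariance of the radial measure $\nu_{a,b}$ (already invoked in the proof of Theorem \ref{teocharact2}). The right-hand side of (\ref{eqcharact2}) with $f_{\alt} = \Delta \cdot P_{\sym} g$ reproduces verbatim the expression in the corollary's identity, so the stated formula is established.

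For the characterization claim I would argue as follows: by Proposition \ref{propdivdisc}, every alternating polynomial is of the form $\Delta \cdot h$ with $h$ symmetric, and for such an $h$ the choice $g = h$ satisfies $P_{\sym} g = h$, so the range of the map $g \mapsto \Delta \cdot P_{\sym} g$ covers all of $\Pol_{\alt}(\car)$. Thus the family of identities obtained here is (tautologically) equivalent to the family in Theorem \ref{teocharact2}, which was already shown there to characterize $\nu_{a,b}$.

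The only subtlety, and the main potential pitfall, is the last step: one must confirm that allowing $g$ to range over all of $\Pol(\car)$ rather than only over symmetric polynomials does not lose characterizing power, but this is automatic because any two $g$ with the same $P_{\sym} g$ give the same identity, and conversely the symmetric representatives already suffice by the preceding paragraph. No further computation is required beyond this bookkeeping.
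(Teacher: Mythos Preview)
Your proof is correct and follows essentially the same route as the paper: substitute $f_{\alt}=\Delta\cdot P_{\sym}g$ into Theorem~\ref{teocharact2}, use the $\W$-invariance of $\nu_{a,b}$ to replace $P_{\sym}g$ by $g$ on the left, and appeal to Proposition~\ref{propdivdisc} to see that this substitution recovers all alternating test polynomials, hence the characterization. The paper's version is more terse but logically identical.
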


\begin{proof}
An alternating polynomial $f_{\alt}\in\Pol_{\alt}(\car)$ can be written as $f_{\alt}=\Delta. P_{\sym}g$ for a polynomial $g\in\Pol(\car)$. Making this substitution in formula (\ref{eqcharact2}) and noting that  
$$\int_{\car}(P_{\sym}h)(c)d\nu_{a,b}(c)=\int_{\car}h(c)d\nu_{a,b}(c)$$
for any polynomial $h\in\Pol(\car)$ the corollary follows.
\end{proof}

\begin{ex}
Consider the case in which $G=\SU(2)$. For $f_{\alt}\in\Pol_{\alt}(\car)$, using the identification $\car\simeq \R$ given by $(x,-x)\to x$ and Corollary \ref{coropadjointdim1} 

\begin{align*}
\int_{\car}f_{\alt}(x)\frac{1}{\Delta(x)} d\nu_{a,b}(x)&=\varphi .\ev_{b}\left(I_{\Delta}\left(\frac{1}{2}\left( T_af_{\alt}-T_{-a}f_{\alt}\right)\right)\right)\\
&=\frac{1}{2ab}\int^{b}_0 \frac{1}{2}\left(f_{\alt}(b+a- t)-f_{\alt}(b-a- t)\right)dt.
\end{align*}
One can verify that the density function $\phi$ of the measure $\nu_{a,b}$ is given when $0<b<a$ by 
\begin{equation*}
	\phi(x) =\begin{cases}
	\frac{x}{4ab} & \text{if $a-b\leq x \leq a+b$}\\
	\frac{-x}{4ab} & \text{if $-a-b\leq x \leq -a+b$}\\
	0  & \text{otherwise.}
	\end{cases}
\end{equation*}
\end{ex}

\section*{Acknowledgements}
%We are grateful to an anonymous referee for his/her valuable suggestions that substantially improved the presentation of the article.

I thank Pablo Zadunaisky for discussions and for suggesting the article \cite{stanley}.

%\section*{Declarations}
%\noindent
%\textbf{Conflicts of interest} The author has no competing interests to declare that are relevant to the content of this article. The author has no relevant financial or non-financial interests to disclose. In addition, the article has no associated data.

%\begin{bibdiv}
%\begin{biblist}

%\bib{kit}{article}{author={W. Audeh}, author={F. Kittaneh}, title={Singular value %inequalities for compact operators}, journal={Linear Algebra Appl.}, number={437}, %date={2012}, pages={no. 10, 2516--2522}}

%\bib{banach}{book}{author={S. Banach}, title={Th\'eorie des op\'erations lin\'eaires}, %series={Éditions Jacques Gabay, Sceaux}, date={1993}}

%\end{biblist}
%\end{bibdiv}

\noindent
\end{document}